\newtheorem{theorem}{Theorem}[section]
\newtheorem{lemma}[theorem]{Lemma}
\newtheorem{proposition}[theorem]{Proposition}
\newtheorem{corollary}[theorem]{Corollary}
\newtheorem{remark}[theorem]{Remark}
\newtheorem{example}[theorem]{Example}
\newtheorem{question}[theorem]{Question}
\newtheorem{definition}[theorem]{Definition}
\newcommand{\f}{{\bf{f}}}
\def\opn#1#2{\def#1{\operatorname{#2}}} 
\opn\PF{PF}
\opn\F{F}
\opn\G{G}
\opn\RF{RF}
\title{Nearly Gorenstein vs almost Gorenstein affine monomial curves}
\author{Alessio Moscariello}
\address{A. Moscariello - Dipartimento di Matematica -  Universit\`a di Pisa - Largo Bruno Pontecorvo 5 - 56127 Pisa - Italy.}
\email{alemoscariello@hotmail.it}
\author{Francesco Strazzanti}
\address{F. Strazzanti - Dipartimento di Matematica e Informatica - Universit\`a degli Studi di Catania - Viale Andrea Doria 6 - 95125 Catania - Italy.}
\email{francesco.strazzanti@gmail.com}
\thanks{The second author was supported by INdAM, more precisely he was ``titolare di un Assegno di Ricerca dell'Istituto Nazionale di Alta Matematica''.}
\subjclass[2010]{13H10, 20M14, 20M25}
\begin{document}

\begin{abstract}
We extend some results on almost Gorenstein affine monomial curves to the nearly Gorenstein case. In particular, we prove that the Cohen-Macaulay type of a nearly Gorenstein monomial curve in $\mathbb{A}^4$ is at most $3$, answering a question of Stamate in this particular case. Moreover, we prove that, if $\mathcal C$ is a nearly Gorenstein affine monomial curve which is not Gorenstein and $n_1, \dots, n_{\nu}$ are the minimal generators of the associated numerical semigroup, the elements of $\{n_1, \dots, \widehat{n_i}, \dots, n_{\nu}\}$ are relatively coprime for every $i$.
\end{abstract}

\keywords{Nearly Gorenstein ring, almost Gorenstein ring, 4-generated numerical semigroup, type of a numerical semigroup}

\maketitle

\section*{Introduction}

Let $k$ be a field and let $R$ be a Cohen-Macaulay positively graded $k$-algebra with graded maximal ideal $\mathfrak m$. Assume that $R$ admits a canonical module $\omega_R$ and let 
\[{\rm tr}(\omega_R)=\sum_{\varphi \in {\rm Hom}_R(\omega_R,R)} \varphi(\omega_R)
\] 
be the trace ideal of $\omega_R$. If $\mathfrak p \in {\rm Spec}(R)$, in \cite[Lemma 2.1]{HHS} it is proved that the ring $R_{\mathfrak p}$ is not Gorenstein if and only if ${\rm tr}(\omega_R) \subseteq \mathfrak p$, thus ${\rm tr}(\omega_R)$ describes the non-Gorenstein locus of $R$. In particular, $R$ is Gorenstein if and only if ${\rm tr}(\omega_R)=R$. In \cite{HHS} Herzog, Hibi, and Stamate call a ring for which $\mathfrak m \subseteq {\rm tr}(\omega_R)$ nearly Gorenstein and provide an extensive study of these rings. Clearly these are Gorenstein on the punctured spectrum, but the converse is not true.

Another generalization of Gorenstein ring is given by the notion of almost Gorenstein ring, introduced by Barucci and Fr\"oberg \cite{BF} in the case of analytically unramified rings of dimension one and generalized in \cite{GMP} and \cite{GTT}. In general, nearly and almost Gorensteinness are two unrelated notions, but in dimension one an almost Gorenstein ring is always nearly Gorenstein. 

In \cite{BF} the definition of almost Gorenstein ring arises in the context of numerical semigroups, indeed the authors introduce first the similar notion of almost symmetric numerical semigroup. The aim of the present paper is to study the relations between almost symmetric and nearly Gorenstein numerical semigroup rings and, in particular, we extend some properties of almost symmetric semigroups to the nearly Gorenstein case. 

We recall that a numerical semigroup $S$ is a submonoid of the natural numbers $\mathbb{N}$ such that $\mathbb{N}\setminus S$ is finite, while the numerical semigroup ring associated with $S$ and a field $k$ is the one-dimensional domain $k[S]=k[t^s \mid s \in S]$. If $S$ is minimally generated by $n_1, \dots, n_\nu$, the ring $k[S]$ is isomorphic to the coordinate ring of the curve in $\mathbb{A}^{\nu}$ parametrized by the monomials $t^{n_1}, t^{n_2}, \dots, t^{n_{\nu}}$. Bearing in mind this bijection, studying the properties of an affine monomial curve is equivalent to study its associated numerical semigroup.

It is well-known that the Cohen-Macaulay type of a numerical semigroup ring does not exceed two, if it has embedding dimension at most three. This turns out to be false in embedding dimension $4$, in fact there is no upper bound for the Cohen-Macaulay type in this case, see \cite[Example p. 75]{FGH}. On the other hand, if $k[S]$ is almost Gorenstein and it has embedding dimension $4$, Numata \cite{Nu3} asked if the Cohen-Macaulay type of $k[S]$ is at most three; this was indeed proved by the first author in \cite{M} by using the new notion of row factorization matrix. In \cite{M} it is also asked if there exists an upper bound for the Cohen-Macaulay type of an almost Gorenstein numerical semigroup ring in terms of its embedding dimension. Generalizing this question, in \cite{S} Stamate raised the same problem for nearly Gorenstein rings. Here we give a positive answer in embedding dimension four by proving that also in this case the Cohen-Macaulay type is at most three. To achieve this result we give a new characterization of nearly Gorenstein numerical semigroups and we introduce a different kind of row factorization matrix which might be useful also in further studies of these semigroups. Moreover, we prove that a nearly Gorenstein numerical semigroup can be obtained by gluing only if it is symmetric and we characterize when a numerical semigroup generated by a generalized arithmetic sequence is nearly Gorenstein.

The structure of the paper is the following. In the first section we fix the notation and recall the basic definitions and results. In Proposition \ref{Nearly Gorenstein} we also prove a useful characterization of nearly Gorenstein numerical semigroups and we introduce the notion of NG-vector.
In the second section we study the type of a nearly Gorenstein numerical semigroup $S$ and the main result is Theorem \ref{t<4}, where we prove that the type of $S$ is at most three if $S$ has embedding dimension four. In Section 3 we study when gluing and generalizing arithmetic sequences are nearly Gorenstein and, as a consequence, we obtain that if $S=\langle n_1, \dots, n_\nu \rangle$ is a nearly Gorenstein semigroup with embedding dimension $\nu$ and $S$ is not symmetric, then the elements of $\{n_1, \dots, \widehat{n_i}, \dots, n_{\nu}\}$ are relatively coprime for every $i$, see Corollary \ref{coprime}. Finally in the last section we raise some questions for further developments.

The computations appearing in this paper were performed by using the GAP system \cite{GAP} and, in particular, the NumericalSgps package \cite{DGM}.

\section{Preliminaries and NG-vectors}

Let $S$ be a numerical semigroup, i.e. a finite submonoid of $\mathbb{N}$ such that $\mathbb{N}\setminus S$ is finite. 
Given $n_1, \dots, n_{\nu} \in \mathbb{N}$ with $\gcd(n_1,\dots,n_{\nu})$ we define the numerical semigroup $\langle n_1, \dots, n_{\nu}\rangle=\{\sum_{i=1}^{\nu} a_i n_i \mid a_i \in \mathbb{N} \text{ for every } i\}$ and we say that $\{n_1, \dots, n_{\nu}\}$ is a set of generators of this semigroup. 
It is well-known that every numerical semigroup has a unique set of minimal generators and it is finite. We denote it by $\G(S)$ and we refer to its cardinality as the embedding dimension of $S$.
The maximum of $\mathbb{N}\setminus S$ is called Frobenius number of $S$ and it is denoted by $\F(S)$. Let also $\PF(S)=\{f \in \mathbb{Z} \mid f+s \in S \text{ for any } s \in S\setminus \{0\}\}$ be the set of pseudo-Frobenius numbers of $S$ and note that $\F(S) \in \PF(S)$. The type $t(S)$ of $S$ is the cardinality of $\PF(S)$.
We note that the embedding dimension and the type of $S$ are equal to the embedding dimension and the Cohen-Macaulay type of $k[S]$ respectively, where $k$ is a field. 
In particular, $k[S]$ is Gorenstein if and only if $\PF(S)=\{\F(S)\}$ and in this case $S$ is said to be symmetric.

A relative ideal of $S$ is a set $I \subseteq \mathbb{Z}$ such that $I+S \subseteq I$ and there exists $x \in S$ for which $x+I \subseteq S$. Two important examples of relative ideals are the maximal ideal $M(S)=S \setminus \{0\}$ and the canonical ideal $K(S)=\{z \in \mathbb{Z} \mid \F(S)-z \notin S\} \supseteq S$. We recall that $K(S)=S$ if and only if $S$ is symmetric and that $K(S)$ is generated as relative ideal by the elements $\F(S)-f$ with $f \in \PF(S)$, i.e. $K(S)=\{\F(S)-f+s \mid f \in \PF(S), s \in S\}$. For more information about numerical semigroups we refer to \cite{BDF, RG1}.

A numerical semigroup $S$ is said to be almost symmetric if $M(S)+K(S) = M(S)$ and $k[S]$ is almost Gorenstein if and only if $S$ is almost symmetric. A nice characterization of these semigroups was proved by Nari in \cite[Theorem 2.4]{N}: $S$ is almost symmetric if and only if $\F(S)-f \in \PF(S)$ for all $f \in \PF(S)$.  

It follows by \cite[Lemma 1.1]{HHS} that $k[S]$ is nearly Gorenstein if and only if $M(S) \subseteq K(S)+(S-K(S))$ and in this case we simply say that $S$ is a nearly Gorenstein semigroup. It is known that an almost symmetric numerical semigroup is nearly Gorenstein because this implication holds for one-dimensional rings, see \cite[Proposition 6.1]{HHS}. Anyway it is possible to obtain this result as a consequence of the next proposition.

\begin{proposition} \label{Nearly Gorenstein}
The following statements hold:
\begin{enumerate}
\item $S$ is almost symmetric if and only if $n+\F(S)-f \in S$ for all $f \in \PF(S)$ and all $n \in \G(S)$;
\item $S$ is nearly Gorenstein if and only if for every $n_i \in \G(S)$ there exists $f_i \in \PF(S)$ such that $n_i+f_i-f \in S$ for all $f \in \PF(S)$. 
\end{enumerate}
In particular, an almost symmetric numerical semigroup is nearly Gorenstein.
\end{proposition}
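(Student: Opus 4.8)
The plan is to derive both characterizations directly from the definitions recalled above, using the explicit description $K(S)=\{\F(S)-f+s \mid f\in\PF(S),\ s\in S\}$ together with the decomposition $M(S)=\bigcup_{n\in\G(S)}(n+S)$, which holds because every nonzero element of $S$ is a generator plus an element of $S$.

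For part (1), I would first observe that $0\in K(S)$ forces $M(S)\subseteq M(S)+K(S)$, and that every element of $M(S)+K(S)$ is automatically nonzero, being a sum of a positive element of $M(S)$ and a nonnegative one of $K(S)\subseteq\N$; hence $M(S)+K(S)=M(S)$ is equivalent to $M(S)+K(S)\subseteq S$. Then, since $M(S)=\bigcup_{n\in\G(S)}(n+S)$ and $K(S)+S=K(S)$, this inclusion holds if and only if $n+K(S)\subseteq S$ for every $n\in\G(S)$; spelling out $K(S)=\{\F(S)-f+s\}$ and discarding the summand $s\in S$ (harmless since $S+S\subseteq S$ and $0\in S$), this becomes exactly $n+\F(S)-f\in S$ for all $n\in\G(S)$ and all $f\in\PF(S)$.

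For part (2), I would start from the criterion $M(S)\subseteq K(S)+(S-K(S))$ recalled above. The set $J:=K(S)+(S-K(S))$ is a relative ideal of $S$ contained in $S$ (because $0\in K(S)$) satisfying $J+S=J$, so $M(S)\subseteq J$ reduces, again via $M(S)=\bigcup_{n\in\G(S)}(n+S)$, to $n_i\in J$ for every $n_i\in\G(S)$. The core of the argument is then the equivalence, for a fixed $n_i$, between $n_i\in K(S)+(S-K(S))$ and the existence of $f_i\in\PF(S)$ with $n_i+f_i-f\in S$ for all $f\in\PF(S)$. One direction is immediate: given such an $f_i$, set $a=\F(S)-f_i\in K(S)$ and $b=n_i-\F(S)+f_i$; the hypothesis says $b+\F(S)-f\in S$ for every $f\in\PF(S)$, i.e. $b+K(S)\subseteq S$, so $b\in S-K(S)$ and $n_i=a+b\in J$. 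For the converse, I would write $n_i=a+b$ with $a=\F(S)-g+s\in K(S)$ (where $g\in\PF(S)$, $s\in S$) and $b\in S-K(S)$, so that $b+\F(S)-f\in S$ for all $f$; adding $s\in S$ and using $b+s=n_i-\F(S)+g$ turns this into $n_i+g-f\in S$ for all $f\in\PF(S)$, so $f_i:=g$ works.

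The final assertion then follows by choosing $f_i=\F(S)$ in the criterion of part (2): by part (1), almost symmetry gives $n_i+\F(S)-f\in S$ for all $f\in\PF(S)$, which is precisely what part (2) requires. The whole argument is a chain of elementary manipulations of relative ideals; the only slightly delicate point is the converse in part (2), where one must notice that the extra term $s\in S$ arising from the presentation of $K(S)$ can be absorbed into $b$, so that a single pseudo-Frobenius number $f_i=g$ serves for all $f\in\PF(S)$ simultaneously.
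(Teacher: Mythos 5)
Your proof is correct, and it differs from the paper's in two respects worth recording. For part (1) the paper does not work with the ideal-theoretic definition at all: it notes that the stated condition amounts to $\F(S)-f\in\PF(S)$ for every $f\in\PF(S)$ and then invokes Nari's characterization of almost symmetric semigroups as a black box. You instead unwind $M(S)+K(S)=M(S)$ directly, using $K(S)\subseteq\N$, $0\in K(S)$, and the decompositions $M(S)=\bigcup_{n\in\G(S)}(n+S)$ and $K(S)=\bigcup_{f\in\PF(S)}(\F(S)-f+S)$; this is a genuinely different, self-contained route that avoids any appeal to Nari, at the cost of a few extra lines. For part (2) your reduction to the membership $n_i-\F(S)+f_i\in(S-K(S))$ and the forward implication coincide with the paper's; the difference lies in the converse. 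The paper first assumes $S$ is not symmetric, writes $n_i=(\F(S)-f_i+s)+x$ with $x\in(S-K(S))$, and uses the minimality of the generator $n_i$ (after checking $(\F(S)-f_i)+x\in S\setminus\{0\}$) to force $s=0$, thereby identifying $x$ exactly. You instead absorb $s$ by adding it to the memberships $b+\F(S)-f\in S$, obtaining $n_i+g-f\in S$ without ever determining $s$ and without the symmetric/non-symmetric case split; this is slightly slicker and equally rigorous, though it does not pin down the decomposition of $n_i$ as the paper's argument does. Both arguments are complete, and your derivation of the final assertion (take $f_i=\F(S)$) matches the paper's.
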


\begin{proof}
(1) The second condition is equivalent to say that $\F(S)-f \in \PF(S)$ for every $f \in \PF(S)$. Therefore, the conclusion follows by Nari's characterization \cite[Theorem 2.4]{N}. \\
\noindent (2) We can assume that $S$ is not symmetric. Since the generators of $K(S)$ are $\F(S)-f$ with $f \in \PF(S)$, the second condition is equivalent to $x_i= n_i + f_i - \F(S) \in (S-K(S))$ for every $i=1, \dots, \nu$. If this holds, it is clear that $n_i=\F(S)-f_i +x_i \in K(S) + (S-K(S))$ for every minimal generator $n_i$ and, thus, $M(S) \subseteq K(S)+(S-K(S))$.

Conversely, assume that $S$ is nearly Gorenstein. Every generator of $S$ lies in $K(S)+(S-K(S))$ and $0 \notin (S-K(S))$, since $S$ is not symmetric. Let $n_i \in \G(S)$ and $n_i=k+x$ with $k \in K(S)$ and $x \in (S-K(S))$. Therefore, $k=\F(S)-f_i +s$ for some $f_i \in \PF(S)$ and some $s \in S$. Since $(\F(S)-f_i)+x \in S \setminus \{0\}$ and $n_i$ is a minimal generator, it follows that $s=0$ and $n_i+f_i - \F(S)=x \in (S-K(S))$ which yields the thesis.    
\end{proof}

\begin{definition} 
Let $S=\langle n_1, \dots, n_{\nu}\rangle$, where $n_1 < \dots < n_{\nu}$ are minimal generators. We call a vector $\f=(f_1,\dots, f_{\nu}) \in \PF(S)^{\nu}$ nearly Gorenstein vector for $S$, briefly {\rm NG}-vector, if $n_i+f_i-f \in S$ for every $f \in \PF(S)$ and every $i=1, \dots, \nu$.
\end{definition}

By Proposition \ref{Nearly Gorenstein} the existence of an NG-vector is equivalent to the nearly Gorensteinness of $S$, whereas $S$ is almost symmetric if and only if it admits the NG-vector $(\F(S), \dots, \F(S))$.

\begin{proposition} \label{minimum index}
Let $(f_1, \dots, f_{\nu})$ be an {\rm NG}-vector $S$. The following hold:
\begin{enumerate}
\item $f_1=\F(S)$;
\item If $i$ is the minimum index for which $f_i \neq \F(S)$, then $f_i=\F(S)-n_i+n_l$ for some $l < i$.
\end{enumerate}
\end{proposition}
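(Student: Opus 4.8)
The plan is to prove both parts by evaluating the defining relation of an NG-vector at $f=\F(S)$ and then using minimality of the relevant minimal generator. For part (1) I would apply the NG-condition with $i=1$ and $f=\F(S)$, obtaining $n_1+f_1-\F(S)\in S$. Since $f_1\le\F(S)$ (the Frobenius number is the largest pseudo-Frobenius number), this element lies in $\{0,1,\dots,n_1\}$, and as $n_1$ is the least positive element of $S$ it must be $0$ or $n_1$. The value $0$ is impossible, since it would force $f_1=\F(S)-n_1\in\PF(S)$ and hence $f_1+n_1=\F(S)\in S$, a contradiction. Therefore $n_1+f_1-\F(S)=n_1$, i.e.\ $f_1=\F(S)$.

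For part (2) let $i$ be the minimum index with $f_i\ne\F(S)$, so that $f_1=\dots=f_{i-1}=\F(S)$ and $f_i<\F(S)$, and set $x:=n_i+f_i-\F(S)$. The NG-condition for $n_i$ at $f=\F(S)$ gives $x\in S$, and $f_i<\F(S)$ forces $0\le x<n_i$; exactly as in part (1), $x=0$ would give $\F(S)\in S$, so $0<x<n_i$. It then suffices to show $x\in\G(S)$, because then $x=n_l$ for some $l<i$ and $f_i=\F(S)-n_i+n_l$, as desired. Suppose instead $x\notin\G(S)$; then $x$ is a sum of at least two minimal generators, so we may write $x=n_a+z$ with $n_a\in\G(S)$ and $z\in S\setminus\{0\}$. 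Since $n_a\le x<n_i$ we get $a<i$, hence $f_a=\F(S)$ by minimality of $i$. Now evaluate the NG-condition for $n_a$ at $f=f_i$: it gives $n_a+\F(S)-f_i\in S$, and using $\F(S)-f_i=n_i-x=n_i-n_a-z$ this element equals $n_i-z$. So $n_i-z\in S$ while $0<z\le x<n_i$ (in particular $n_i-z>0$), and therefore $n_i=(n_i-z)+z$ expresses the minimal generator $n_i$ as a sum of two nonzero elements of $S$ --- a contradiction. Hence $x\in\G(S)$.

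The main obstacle, in my view, is spotting the reduction used in part (2): testing the NG-condition of a generator $n_a<n_i$ --- which is forced to equal $\F(S)$ precisely because $i$ is the minimal index where the vector departs from $\F(S)$ --- against the exceptional pseudo-Frobenius number $f_i$, so that a hypothetical decomposition $x=n_a+z$ produces the forbidden decomposition $n_i=(n_i-z)+z$. Once this move is found, the remainder is elementary arithmetic with pseudo-Frobenius numbers and the ordering $n_1<\dots<n_\nu$.
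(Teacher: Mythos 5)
Your proof is correct and takes essentially the same route as the paper's: part (1) is identical, and in part (2) your contradiction $n_i=(n_i-z)+z$, obtained by testing the NG-condition of a smaller generator $n_a$ (whose entry is forced to be $\F(S)$) against $f_i$, is exactly the decomposition the paper uses when it writes $n_i=(\F(S)-f_i+n_l)+(\text{rest})$ and forces the rest to vanish by minimality of $n_i$. The only difference is presentational: you phrase the key step as showing $x=n_i+f_i-\F(S)$ must itself be a minimal generator, while the paper phrases it as forcing all but one coefficient in a factorization of $x$ over $n_1,\dots,n_{i-1}$ to be zero.
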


\begin{proof}
(1) By definition of $f_1$ we have $n_1 + f_1 - \F(S) \in S$. If $n_1 + f_1 - \F(S)=0$, then $\F(S)=f_1+n_1 \in S$, which is a contradiction. Therefore, since $f_1 < \F(S)$ it follows that $n_1 + f_1 - \F(S) \leq n_1$ and, then, $n_1 + f_1 - \F(S)=n_1$, i.e. $f_1=\F(S)$. \\
(2) Since $n_i > n_i + f_i - \F(S) \in S$, it follows that $n_i+f_i-\F(S)=a_1 n_1 + \dots + a_{i-1}n_{i-1}$ for some non-negative integers $a_1, \dots, a_{i-1}$. At least one of these integers has to be positive, so assume that $a_l >0$; then, $n_i = \F(S)-f_i+n_l+ a_1n_1 + \dots (a_l-1)n_l + \dots a_{i-1}n_{i-1}$. From $\F(S)=f_l$, it follows that $\F(S)-f_i+n_l \in S \setminus \{0\}$ and, since $n_i$ is a minimal generator, this implies that $a_l=1$ and $a_j=0$ if $j \neq l$, i.e. $f_i=\F(S)-n_i+n_l$.
\end{proof}

We remark that $\f$ could be not unique. For instance the pseudo-Frobenius numbers of $S=\langle 4,5,11 \rangle$ are $\PF(S)=\{6,7\}$ and it is easy to see that $(7,6,6)$ and $(7,6,7)$ are the NG-vectors of $S$. In particular, in this case $S$ is nearly Gorenstein but not almost symmetric.

\section{On the type of a nearly Gorenstein semigroup}

Throughout this section we set $S=\langle n_1, \dots, n_\nu \rangle$, where $n_1 < n_2 < \dots < n_\nu$ are minimal generators. Moreover, if $S$ is nearly Gorenstein, we fix an NG-vector $\f=(f_1, \dots, f_{\nu})$.

For every $f \in \PF(S)$ and every $i=1, \dots, \nu$ we have $f+n_i=\sum_{j=1}^\nu a_{ij}n_j$ with $a_{ij}\geq 0$ and $a_{ii}=0$. A square matrix $A=(a_{ij})$ of order $\nu$ is said to be an RF-matrix (short for row-factorization matrix) for $f$ if $a_{ii}=-1$, $a_{ij} \in \mathbb{N}$ when $i \neq j$ and $f=\sum_{j=1}^\nu a_{ij}n_j$. In this paper we refer to this notion as  $\RF^+$ matrix to avoid confusion with another matrix that we are going to introduce.

If $S$ is nearly Gorenstein, for every $i$ such that $f \neq f_i$ we also have
\[
n_i+f_i-f=\sum_{j=1}^{\nu}b_{ij}n_j 
\]
with $b_{ij} \geq 0$ and $b_{ii}=0$; thus, we can define another matrix similarly to the previous case.

\begin{definition}
Let $(f_1, \dots, f_{\nu})$ be an {\rm NG}-vector for $S$ and let $f \in \PF(S)$. We say that a square matrix $B=(b_{ij})$ of order $\nu$ is an $\RF^-$ matrix for $f$ if B satisfies the following properties: if $f=f_i$ in the $i$-th row of $B$ there are only zeroes, otherwise $b_{ii}=-1$ and the entries $b_{ij}$ are such that $f_i-f=\sum_{j=1}^{\nu}b_{ij}n_j$.    
\end{definition}

Clearly this matrix depends on the NG-vector, but, even if we fix it, there could be more matrices associated with $f$.

\begin{example} \rm
The semigroup $S=\langle 10,12,37,75 \rangle$ is nearly Gorenstein because $(65,63,38,63)$ is an NG-vector for it. There is a unique $\RF^+$ matrices associated with $38 \in \PF(S)$, which is
\[
\begin{pmatrix}
-1&4&0&0 \\
5&-1&0&0 \\
0&0&-1&1 \\
4&3&1&-1 \\
\end{pmatrix},
\]
whereas there are two possible $\RF^-$ matrices for $38$:
\[
\begin{pmatrix}
-1&0&1&0 \\
0&-1&1&0 \\
0&0&0&0  \\
10&0&0&-1 \\
\end{pmatrix}, \ \ \  \ \ \ \ \ 
\begin{pmatrix}
-1&0&1&0 \\
0&-1&1&0 \\
0&0&0&0  \\
4&5&0&-1 \\
\end{pmatrix}.
\]
We note that also $(65,63,38,38)$ is an NG-vectors for $S$ and, if we choose it, there is only one $\RF^{-}$ matrix for $38$ because every entry in the last row has to be zero. 
\end{example}

\begin{lemma} \label{Coppie}
Let $S$ be nearly Gorenstein and let $f\in \PF(S)$. Also, let $A=(a_{ij})$ and $B=(b_{ij})$ be an $\RF^+$ and an $\RF^{-}$ matrix for $f$ respectively. Then, $a_{jk}b_{kj}=0$ for every $j\neq k$.  
\end{lemma}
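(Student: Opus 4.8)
The plan is to argue by contradiction. Suppose that for some pair of indices $j \neq k$ we have both $a_{jk} > 0$ and $b_{kj} > 0$; I will deduce that the pseudo-Frobenius number $f_k$ lies in $S$, which is impossible.

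First I would read off what each hypothesis says at the level of a single row. Since $b_{kj} > 0$, the $k$-th row of $B$ is not identically zero, hence $f \neq f_k$ and the $\RF^-$ relation in that row gives
\[
n_k + f_k - f = \sum_{l \neq k} b_{kl} n_l, \qquad b_{kl} \in \N,\ b_{kj} \geq 1,
\]
which is exactly the NG-vector condition $n_k + f_k - f \in S$ written out. On the other side, the $j$-th row of the $\RF^+$ matrix $A$ always gives
\[
n_j + f = \sum_{l \neq j} a_{jl} n_l, \qquad a_{jl} \in \N,\ a_{jk} \geq 1.
\]

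Next I would add these two identities; the term $f$ cancels and I obtain $n_j + n_k + f_k = \sum_{l \neq j} a_{jl} n_l + \sum_{l \neq k} b_{kl} n_l$. The crucial point is that the right-hand side contains the summand $a_{jk} n_k$ (from the first sum, legitimate since $k \neq j$) and the summand $b_{kj} n_j$ (from the second sum, legitimate since $j \neq k$), each with coefficient at least $1$. Thus I may move $n_j + n_k$ across, lowering these two coefficients by one while keeping every coefficient non-negative:
\[
f_k = (a_{jk} - 1)\, n_k + (b_{kj} - 1)\, n_j + \sum_{l \neq j,k} (a_{jl} + b_{kl})\, n_l \in S.
\]
This contradicts $f_k \in \PF(S)$, since a pseudo-Frobenius number never belongs to $S$, and the lemma follows.

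I do not anticipate a real obstacle: the argument is a one-line addition of row relations, and the only thing to be careful about is the index bookkeeping in the last display — checking that the $-n_j$ and $-n_k$ can genuinely be absorbed, which is precisely what $a_{jk} \geq 1$ and $b_{kj} \geq 1$ secure. It is worth noting that only the single pseudo-Frobenius number $f$ enters (through $A$) together with the fixed NG-vector (through $B$), so the case $f = f_j$, where the $j$-th row of $B$ vanishes, causes no trouble because we only use the $j$-th row of $A$.
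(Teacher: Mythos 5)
Your proposal is correct and is essentially the paper's own argument: the paper likewise adds the $j$-th row relation of $A$ to the $k$-th row relation of $B$ to write $f_k=f+(f_k-f)=\sum_l(a_{jl}+b_{kl})n_l$ and concludes from $f_k\notin S$ that one of the two coefficients $a_{jk}-1$, $b_{kj}-1$ must be negative; you merely phrase this contrapositive step as a proof by contradiction. The handling of the degenerate case $f=f_k$ (vanishing row of $B$) also matches the paper.
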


\begin{proof}
Assume $f \neq f_k$, otherwise $b_{kj}=0$. Then, 
\[
f_k=f+(f_k-f)=(a_{j1}+b_{k1}) n_1 + \dots (a_{j\nu}+b_{k\nu}) n_\nu \notin S.
\]
Thus, at least one coefficient has to be negative and, since $a_{jj}=b_{kk}=-1$, the only possibilities are $a_{jk}-1=-1$ or $b_{kj}-1=-1$, that is $a_{jk}b_{kj}=0$.
\end{proof}

In \cite{M} it is proved that the type of an almost symmetric semigroup with four generators does not exceed three. In the following theorem we prove that this bound holds also for nearly Gorenstein semigroups.

\begin{theorem} \label{t<4}
If $S=\langle n_1, n_2, n_3, n_4 \rangle$ is nearly Gorenstein, then $t(S) \leq 3$. Moreover, if $S$ is not almost symmetric and $i$ is the minimum index such that $f_i \neq f_1$, then either
\begin{gather*}
\PF(S) = \{\F(S), \, \F(S)-n_i+n_l \} \text{ \ or } \\
\PF(S) = \{\F(S), \, \F(S)-n_i+n_l, \, \lambda n_k - n_j\},
\end{gather*}
where $l < i$, $\{i,j,k,l\}=\{1,2,3,4\}$ and $\lambda \in \mathbb{N}$.
\end{theorem}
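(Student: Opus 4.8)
The plan is to analyze the structure forced by an NG-vector $\f=(f_1,f_2,f_3,f_4)$ together with the interplay of $\RF^+$ and $\RF^-$ matrices. We may assume $S$ is not symmetric, hence $t(S)\geq 2$. By Proposition \ref{minimum index}(1) we have $f_1=\F(S)$, and if $S$ is not almost symmetric then by Proposition \ref{minimum index} there is a minimum index $i$ with $f_i\neq\F(S)$, and $f_i=\F(S)-n_i+n_l$ for some $l<i$. The first goal is to show $\PF(S)$ contains at most one pseudo-Frobenius number other than $\F(S)$ and $f_i$; so suppose toward a contradiction that there are two more, say $g$ and $g'$, giving $t(S)\geq 4$, i.e. $\PF(S)=\{\F(S),f_i,g,g'\}$ (and in fact $t(S)=4$ since $\nu=4$ bounds the type by Froberg--Gorenstein--H\"aggkvist only in small embedding dimension — here we do not have that, so we must genuinely rule out $t(S)\geq 4$; I will treat $\PF(S)=\{\F(S),f_i,g,g',\dots\}$).

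The key device is to fix an element $f\in\PF(S)$ with $f\neq f_i$ for as many rows as possible (ideally $f=\F(S)$, so the first row of any $\RF^-$ is zero) and to combine Lemma \ref{Coppie} with the minimal-generator constraints. For each $f\in\PF(S)$, choose an $\RF^+$ matrix $A^{(f)}=(a_{jk})$ and an $\RF^-$ matrix $B^{(f)}=(b_{kj})$; Lemma \ref{Coppie} gives $a_{jk}b_{kj}=0$ for all $j\neq k$. Now I would run the following counting argument: in each non-zero row $k$ of $B^{(f)}$ we have $b_{kk}=-1$ and $\sum_j b_{kj}n_j=f_k-f$, and since $f_k\in\PF(S)$, the positive entries of that row cannot all be "absorbed'' — more precisely, reading $f_k = f + (f_k-f)$ as in the proof of Lemma \ref{Coppie}, at least one column index $k$ of every row must be hit by a zero on the $A$-side. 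Tracking which generators $n_l$ appear (the generator $n_l$ from Proposition \ref{minimum index}(2) plays a distinguished role, since $f_i-\F(S)=n_l-n_i$ has a particularly simple $\RF^-$ row with a single $+1$ in column $l$) one forces a rigid pattern of zeros in both $A$ and $B$. The heart of the matter is to show this pattern is incompatible with all four $n_j$ being genuine minimal generators and with $S\neq\N$: a too-sparse $\RF^+$ matrix for $\F(S)$ would express some $n_j$ using fewer than two other generators, contradicting minimality, unless a forced chain of equalities among the $f$'s collapses $t(S)$ below $4$.

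Once $t(S)\leq 3$ is established, the "Moreover'' part needs the explicit description. If $t(S)=2$ then $\PF(S)=\{\F(S),f_i\}=\{\F(S),\F(S)-n_i+n_l\}$ by Proposition \ref{minimum index}, which is the first displayed case. If $t(S)=3$, write $\PF(S)=\{\F(S),f_i,g\}$ with the third element $g$ to be identified. Here I would examine the NG-conditions $n_j+f_j-g\in S$ for the remaining indices: since $f_1=\F(S)$ and (by minimality of $i$) $f_j$ for $j<i$ equals $\F(S)$, while $f_i=\F(S)-n_i+n_l$, the condition $n_j+f_j-g\in S$ for a suitable $j$ — combined with $g\in\PF(S)$ so that $g+n_k\in S$ — pins down $g$ as $\lambda n_k - n_j$ for the remaining pair $\{j,k\}=\{1,2,3,4\}\setminus\{i,l\}$ and some $\lambda\in\N$; one reads $\lambda$ off the $\RF^+$ row of $g$, using that row $j$ of that $\RF^+$ matrix has a single nonzero off-diagonal entry (in column $k$) because the $\RF^-$ matrix forces the other entries to vanish via Lemma \ref{Coppie}.

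I expect the main obstacle to be the combinatorial bookkeeping in the second paragraph: ruling out $t(S)\geq 4$ requires simultaneously controlling the zero-patterns of an $\RF^+$ and an $\RF^-$ matrix for possibly several choices of $f\in\PF(S)$, and the argument has to be organized so that the minimal-generator constraint is used decisively rather than just repeatedly. The subtlety is that $\RF$-matrices are not unique, so the contradiction must be drawn from \emph{every} compatible choice; I anticipate needing to pick $f=\F(S)$ and the specific NG-vector from Proposition \ref{minimum index} to make the distinguished generator $n_l$ carry the argument, and then do a short case split on which of the four indices is $i$, $l$, $j$, $k$.
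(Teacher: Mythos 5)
Your proposal correctly identifies the ingredients (Proposition \ref{minimum index}, the $\RF^+$/$\RF^-$ matrices, Lemma \ref{Coppie}) and the overall shape of the argument, but the decisive combinatorial step is not actually carried out, and you acknowledge as much. Concretely, what is missing is the following chain. For $f\in\PF(S)\setminus\{f_1,f_i\}$, the $l$-th row of an $\RF^-$ matrix (using $f_l=f_1$) gives $f_1-f=-n_l+a_in_i+a_jn_j+a_kn_k$; substituting $f_i=f_1-n_i+n_l$ yields $f_i-f=(a_i-1)n_i+a_jn_j+a_kn_k\notin S$, which forces $a_i=0$, so each such $f$ satisfies $f_1-f=-n_l+a_jn_j+a_kn_k$ with $a_j,a_k$ not both zero. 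If there were \emph{three} such $f$, by pigeonhole two of them would have a nonzero coefficient at the same index, say $j$; then both the $(l,j)$ and $(i,j)$ entries of suitable $\RF^-$ matrices are positive, Lemma \ref{Coppie} kills the $(j,l)$ and $(j,i)$ entries of every $\RF^+$ matrix, and both elements take the form $\lambda n_k-n_j$. Their difference is then a multiple of $n_k$, so one of the two pseudo-Frobenius numbers lies in $S$ --- that is the contradiction. Note that this mechanism is different from the one you guessed (``a too-sparse $\RF^+$ matrix would express some $n_j$ using fewer than two other generators, contradicting minimality''): the contradiction does not come from non-minimality of a generator but from two distinct pseudo-Frobenius numbers differing by an element of $S$.

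There is a second gap in the endgame. After bounding the number of extra pseudo-Frobenius numbers by two, one still has to rule out $t(S)=4$ with $\PF(S)=\{f_1,f_i,f,f'\}$ where $f=\lambda n_k-n_j$ and $f'=\mu n_j-n_k$ (the two extra elements forced into ``opposite'' forms). This requires an explicit arithmetic manipulation: adding the factorizations of $f$ and $f_1-f$ (resp.\ $f'$ and $f_1-f'$) gives $(\mu+1-a_j)n_j=(\lambda+1-b_k)n_k$, from which one deduces $b_k\geq\lambda+1$, hence $b_kn_k-n_j\in S$, and after excluding coefficients at $n_l$ and $n_i$ in its factorization one applies Lemma \ref{Coppie} once more to conclude $f'=\gamma n_k-n_j$ and finally $f=f'$. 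Your sketch of the ``Moreover'' part assumes $t(S)\leq 3$ is already known and tries to identify the third element afterwards, whereas in fact the form $\lambda n_k-n_j$ falls out of the same computation that proves the bound; without the explicit computation neither the bound $t(S)\leq 3$ nor the description of $\PF(S)$ is established.
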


\begin{proof}
We can assume that $S$ is not almost symmetric by \cite[Theorem 1]{M}. 
If $i$ is the minimum index such that $f_i \neq f_1$, then $f_i=f_1-n_i+n_l$ for some $l < i$ by Proposition \ref{minimum index}. Assume by contradiction that there exist three different integers $f,f',f'' \in \PF(S) \setminus \{f_1,f_i\}$ and let \begin{align*}
f_1-f&=-n_l+a_i n_i +a_j n_j + a_k n_k, \\
f_1-f'&=-n_l+b_i n_i +b_j n_j + b_k n_k, \\
f_1-f''&=-n_l+c_i n_i +c_j n_j + c_k n_k.
\end{align*}
Then, $f_i-f=f_1-f-n_i+n_l=(a_i-1)n_i+a_jn_j+a_kn_k \notin S$ and, thus, $a_i=0$; similarly $b_i=c_i=0$. Therefore, $f_1-f=-n_l+a_jn_j+a_kn_k$ and $f_i-f=-n_i+a_jn_j+a_kn_k$ and it is not possible that both $a_j$ and $a_k$ are zero. Since the same holds for $f'$ and $f''$, we can assume without loss of generality that $a_j \neq 0$ and $c_j \neq 0$. Then, there is an $\RF^-$ matrix for $f$ and $f''$ in which the $(i,j)$ and $(l,j)$ entries are positive and, in light of Lemma \ref{Coppie}, this means that the $(j,i)$ and $(j,l)$ entries of every $\RF^+$ matrix for $f$ and $f''$ are zero, i.e. $f=-n_j+\lambda n_k$ and $f''=-n_j+\gamma n_k$. Therefore, it follows that either $f - f'' \in S$ or $f''- f \in S$, which yields a contradiction. 

Hence, there are at most two pseudo-Frobenius numbers $f$ and $f'$ different from $f_1$ and $f_i$, moreover
\begin{align}
&f=-n_j+\lambda n_k   &f_1-f=-n_l + a_j n_j  \hspace{1cm}  &f_i-f=-n_i + a_j n_j\\
&f'=-n_k+\mu n_j  &f_1-f'=-n_l + b_k n_k   \hspace{1cm}  &f_i-f'=-n_i + b_k n_k. \label{f'}
\end{align}
By adding the first two equalities of every line, we get $f_1=-n_l+(a_j-1)n_j+\lambda n_k$ and $f_1=-n_l + (b_k-1)n_k+\mu n_j$, thus $(\mu+1-a_j)n_j=(\lambda+1-b_k)n_k$. Since $b_k$ is positive, $\lambda+1-b_k\leq \lambda$; moreover, if $\lambda+1-b_k >0$, then also $\mu+1-a_j >0$ and $f=\lambda n_k-n_j \geq_S (\lambda+1-b_k)n_k-n_j=(\mu-a_j)n_j \in S$ which yields a contradiction. Therefore, $\lambda+1-b_k\leq 0$, i.e. $b_k \geq \lambda+1$, and, since $(\lambda+1)n_k-n_j=f+n_k \in S$, it follows that $b_kn_k-n_j\in S$. 
This means that $b_k n_k-n_j=\alpha_ln_l+\alpha_i n_i+\alpha_j n_j+\alpha_k n_k$ with $\alpha_l,\alpha_i,\alpha_j,\alpha_k \geq 0$. If $\alpha_l$ (resp. $\alpha_i$) is positive, then by replacing $b_k$ in (\ref{f'}) we get $f_1-f' \in S$ (resp. $f_i-f' \in S$) which is a contradiction. It follows that
\begin{align*}
&f_1-f'=-n_l + (\alpha_j+1)n_j+\alpha_k n_k \\
&f_i-f'=-n_i + (\alpha_j+1)n_j+\alpha_k n_k.
\end{align*}
Hence, there is an $\RF^-$ matrix for $f'$ whose $(l,j)$ and $(i,j)$ entries are positive and, consequently, the $(j,l)$ and $(j,i)$ entries of every $\RF^+$ for $f'$ are zero, i.e. $f'=-n_j + \gamma n_k$. This implies that $f=f'$ and $t(S) \leq 3$.
\end{proof}

\begin{example} \rm
Consider the numerical semigroup $S=\langle 15,17,28,41 \rangle$ which is nearly Gorenstein with a unique NG-vector $(121,121,108,95)$. According to Proposition \ref{minimum index} we have $\F(S)=121$ and $108=\F(S)-28+15$. Moreover, the type of $S$ is three and the other pseudo-Frobenius number is $95=8*17-41$.
\end{example}

\begin{corollary} 
Let $S=\langle n_1, n_2, n_3, n_4 \rangle$ be nearly Gorenstein. 
\begin{enumerate}
\item Let $\F(S) \neq f_2=f_3=f_4$. Then, either $\PF(S)=\{f_2,\F(S)\}$ or $\PF(S)=\{f_2/2, f_2,\F(S)\}$.
\item Assume that $\F(S)=f_i=f_j \neq f_k$ with $\{i,j,k\}=\{2,3,4\}$. If there is $f \in \PF(S) \setminus \{f_k, \F(S)\}$, then either $f=\F(S)/2$ or $S$ is almost symmetric. 
\end{enumerate}
\end{corollary}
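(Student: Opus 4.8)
Here is the plan I would follow.

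The plan is to derive both items from Theorem~\ref{t<4} — which already yields $t(S)\le 3$ — together with Proposition~\ref{minimum index} and the defining property of the fixed {\rm NG}-vector $\f=(f_1,f_2,f_3,f_4)$. In each item the components of $\f$ take exactly two values: a value $v$ occurring three times and a value $w\neq v$ occurring once. In item (1) this is $v=f_2=f_3=f_4$ and $w=f_1=\F(S)$, and Proposition~\ref{minimum index} gives $v=\F(S)-n_2+n_1$; in item (2) it is $v=\F(S)=f_1=f_i=f_j$ and $w=f_k$, where $f_k=\F(S)-n_k+n_l$ for some $l<k$ by Proposition~\ref{minimum index}. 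In both cases $\{v,w\}\subseteq\PF(S)$ and $v\neq w$. If $t(S)=1$ then $\f=(\F(S),\dots,\F(S))$, contradicting the hypotheses; if $t(S)=2$ then $\PF(S)=\{v,w\}$, which in item (1) is $\{f_2,\F(S)\}$ and in item (2) makes the hypothesis of item (2) empty. So from now on I assume $t(S)=3$ and write $\PF(S)=\{v,w,g\}$, where $g$ is the remaining pseudo-Frobenius number (and is the element $f$ from the statement of item (2)).

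The core of the argument will be to prove that $v-g\in\PF(S)$. Granting this: in item (1), since $0<v-g<v=f_2<\F(S)$ one must have $v-g=g$, hence $g=f_2/2$ and $\PF(S)=\{f_2/2,f_2,\F(S)\}$; in item (2), $0<\F(S)-g<\F(S)$ forces $\F(S)-g\in\{f_k,g\}$, so either $\F(S)-g=g$ and $g=\F(S)/2$, or $\F(S)-g=f_k$ and $\PF(S)=\{f_k,\F(S)-f_k,\F(S)\}$ with $f_k+(\F(S)-f_k)=\F(S)$, whence $S$ is almost symmetric by \cite[Theorem~2.4]{N}. To prove $v-g\in\PF(S)$ I will check that $v-g$ is a positive gap of $S$ with $(v-g)+n_m\in S$ for every minimal generator $n_m$. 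First, $v-g\notin S$: otherwise $v=g+(v-g)\in S$ since $g\in\PF(S)$ and $v-g\neq 0$, contradicting $v\in\PF(S)$; the same argument gives $w-g\notin S$. For the three indices $m$ with $f_m=v$, the inclusion $(v-g)+n_m=n_m+f_m-g\in S$ is exactly the {\rm NG}-condition for $f=g$; this settles all generators except the one $n_{m_0}$ with $f_{m_0}=w$ (here $m_0=1$ in item (1) and $m_0=k$ in item (2)).

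The main obstacle will be this last generator: I must show $(v-g)+n_{m_0}\in S$, which does not follow from a single {\rm NG}-condition and requires pinning down a factorization of $v-g$. I start from the {\rm NG}-condition for $f=g$ at the index $m_1$ with $n_{m_1}=n_2$ in item (1) and $n_{m_1}=n_l$ in item (2), i.e. $(v-g)+n_2=n_2+f_2-g\in S$, respectively $(v-g)+n_l=n_k+f_k-g\in S$. Since $v-g\notin S$, the generator $n_{m_1}$ occurs with coefficient $0$ in any factorization of this element, so $v-g=-n_{m_1}+(\text{non-negative combination of the remaining three generators})$. Writing $w-g=(v-g)+(n_{m_1}-n_{m_0})$ and using $w-g\notin S$, one forces the coefficient of $n_{m_0}$ in that combination to vanish as well, so $v-g=-n_{m_1}+(\text{nonzero non-negative combination of the two other generators})$; in particular $v-g>0$ (this is automatic in item (2), as $g<\F(S)$). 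Finally, one adds $n_{m_0}$ and absorbs it using the {\rm NG}-conditions $n_a+f_a-w\in S$ at the $v$-indices $a$ — which read $n_3-n_2+n_1\in S$ and $n_4-n_2+n_1\in S$ in item (1), and $n_p+n_k-n_l\in S$ for each of the two $v$-indices $p\neq l$ in item (2) — obtaining $(v-g)+n_{m_0}\in S$ and hence $v-g\in\PF(S)$, which concludes the proof.
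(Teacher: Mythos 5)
Your proposal is correct and follows essentially the same route as the paper: in both arguments the whole point is to show that $f_2-f$ (resp.\ $\F(S)-f$) is again a pseudo-Frobenius number by checking that adding each minimal generator lands in $S$ --- three generators come for free from the {\rm NG}-conditions, and the fourth from an $\RF$-type factorization argument --- after which Theorem~\ref{t<4} and Nari's characterization finish the proof exactly as you describe. The only cosmetic difference is in how the fourth generator is handled: the paper adds the factorizations of $\F(S)-f$ and $f_2-\F(S)$ and kills the offending coefficient because their sum is not in $S$, whereas you factor $f_2-f$ directly through $n_2$ (using the explicit forms from Proposition~\ref{minimum index}) and absorb $n_1$ via $n_a-n_2+n_1\in S$; both are instances of the same Lemma~\ref{Coppie}-style reasoning.
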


\begin{proof}
(1) Suppose that there exists $f \in \PF(S) \setminus \{f_2, \F(S)\}$. Since $\F(S)=f_1$, there is a factorization $\F(S)-f =-n_1+a_2 n_2+a_3n_3 + a_{4}n_4$ with $a_2, a_3, a_4 \geq 0$ and at least one of them positive; without loss of generality we may assume that $a_2 > 0$. Moreover, there exists a factorization $f_2 -\F(S) = b_1 n_1 -n_2 +  b_3 n_{3} +  b_4 n_4$ with $b_1, b_3, b_4 \geq 0$. Therefore, 
\[
f_2-f = (f_2-\F(S))+(\F(S)-f)=(b_1-1)n_1 +(a_2-1)n_2+(a_3+b_3)n_3+(a_4+b_4)n_4
\]
and, since $f_2 - f \notin S$ and $a_2 >0$, it follows that $b_1=0$ and $f_2-f+n_1 \in S$. Moreover, $f_2-f+n_i \in S$ for $i=2,3,4$ by hypothesis and, thus, $f_2-f \in \PF(S)$. Since $f_2-f < f_2 < \F(S)$, Theorem \ref{t<4} implies that $f_2-f=f$, i.e. $f=f_2/2$. \\
(2) By using the same argument of the previous case we can prove that $\F(S)-f \in \PF(S)$ and by Theorem \ref{t<4} it follows that either $\F(S)-f=f_k$ or $\F(S)-f=f$. In the first case Nari's Theorem \cite[Theorem 2.4]{N} implies that $S$ is almost symmetric, whereas in the second one $f=\F(S)/2$.  
\end{proof}

All the possibilities of the previous corollary may occur as the following examples show.

\begin{example} \rm
{\bf 1.} The semigroup $\langle 11,12,37,50 \rangle$ is nearly Gorenstein with $\f=(76,75,75,75)$ and it has type 2. \\
{\bf 2.} The semigroup $S=\langle 10,11,45,79 \rangle$ has the NG-vector $(69,68,68,68)$ and $\PF(S)=\{34,68,69\}$. We note that also $(69,68,68,34)$, $(69,68,68,69)$, $(69,68,34,68)$, $(69,68,34,34)$ and $(69,68,34,69)$ are NG-vectors for $S$. \\
{\bf 3.} The semigroup $\langle 10,11,12,19 \rangle$ is nearly Gorenstein with $\f=(37,37,37,28)$ and it has type $2$. \\
{\bf 4.} The semigroup $S=\langle 10,11,12,29 \rangle$ is nearly Gorenstein by choosing $\f=(38,38,37,38)$ and $\PF(S)=\{19,37,38\}$. Also in this case there are more NG-vectors: $(38,38,37,19)$, $(38,37,37,38)$ and $(38,37,37,19)$. \\
{\bf 5.} Let $S=\langle 8,9,11,15 \rangle$. An NG-vector for $S$ is $(21,21,21,14)$ and it is almost symmetric because $\PF(S)=\{7,14,21\}$. Indeed $(21,21,21,14)$ and $(21,21,21,21)$ are the only NG-vectors of $S$.
\end{example}

We conclude this section with some results about the general case.

\begin{proposition} \label{f1-fj}
Let $S$ be a nearly Gorenstein semigroup and suppose that $f_1, f_2, \dots, f_i$ are pairwise distinct for some $i \leq \nu$. The following statements hold:
\begin{enumerate}
	\item If $f \in \PF(S) \setminus \{f_1, \dots, f_i\}$, then $f_1-f=-n_1+a_{i+1}n_{i+1}+ \dots + a_{\nu}n_{\nu}$, with $a_j \geq 0$ for every $j=i+1, \dots, \nu$;
    \item $f_1-f_{j} =n_{j}-n_1$ for every $j=1, \dots i$.
\end{enumerate}
\end{proposition}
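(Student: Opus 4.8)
The plan is to prove (1) and (2) simultaneously, by induction on $i$: for each $i$ with $f_1,\dots,f_i$ pairwise distinct I prove at once that $f_1-f_m=n_m-n_1$ for all $m\le i$ and that $f_1-f=-n_1+\sum_{l\ge i+1}a_ln_l$ with $a_l\ge0$ for every $f\in\PF(S)\setminus\{f_1,\dots,f_i\}$ (recall $f_1=\F(S)$ by Proposition~\ref{minimum index}). The base case $i=1$ is easy: (2) is vacuous, and for (1) one only needs that, writing $n_1+f_1-f=\sum_l a_ln_l$ (which lies in $S$ by the NG-vector property at index $1$), the coefficient $a_1$ vanishes — otherwise $f_1-f=\sum_l a_ln_l-n_1\in S$, and then either $f_1-f=0$, i.e.\ $f=f_1$, or $f+(f_1-f)=f_1=\F(S)\in S$, both impossible.

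For the inductive step I first establish (2) for $j=i$. Put $g:=n_i+f_i-\F(S)$; by the NG-vector property $g\in S$, while $g\neq0$ (else $\F(S)=n_i+f_i\in S$) and $g<n_i$ (since $f_i<\F(S)$, as $f_i\neq f_1$), so $g=\sum_{l=1}^{i-1}b_ln_l$ with $b_l\ge0$ not all zero, and the goal is $g=n_1$. If some factorization has $b_1\ge1$ I am done: then $n_i=(n_1+f_1-f_i)+(g-n_1)$, where $n_1+f_1-f_i\in S\setminus\{0\}$ (in $S$ by the NG-vector property at index $1$; nonzero because otherwise $f_i=n_1+\F(S)\in S$), so by minimality of $n_i$ — a minimal generator is not a sum of two nonzero elements of $S$ — one gets $g-n_1=0$. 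Otherwise $b_1=0$, so $g\in\langle n_2,\dots,n_{i-1}\rangle$ and hence $g\ge n_2$; on the other hand, applying the inductive statement (1) at level $i-1$ to $f=f_i\in\PF(S)\setminus\{f_1,\dots,f_{i-1}\}$ gives $n_1+f_1-f_i=\sum_{l\ge i}a_ln_l$, a nonzero element of $\langle n_i,\dots,n_\nu\rangle$, hence $\ge n_i$. Since $g+(n_1+f_1-f_i)=n_1+n_i$, this yields $n_1+n_i\ge n_2+n_i$, i.e.\ $n_1\ge n_2$, a contradiction. So $b_1=0$ cannot occur and (2) holds for $j=i$.

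Finally, for (1) at level $i$, take $f\in\PF(S)\setminus\{f_1,\dots,f_i\}$; by the inductive statement (1) at level $i-1$ we may write $n_1+f_1-f=\sum_{l\ge i}a_ln_l$, and it remains to show $a_i=0$. If $a_i\ge1$ then $n_1+f_1-f-n_i\in S$; but using (2) for $j=i$ just proved, $f_i=\F(S)+n_1-n_i$, so this element equals $f_i-f$, which cannot lie in $S$ (if $f_i>f$ then $f+(f_i-f)=f_i\in S$; if $f_i<f$ then $f_i-f<0$; if $f_i=f$ then $f\in\{f_1,\dots,f_i\}$ — all impossible). Hence $a_i=0$, i.e.\ $f_1-f=-n_1+\sum_{l\ge i+1}a_ln_l$. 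The delicate point is the case $b_1=0$: since a factorization of $g$ need not be unique one cannot simply read off $g=n_1$, and it is precisely there that the joint induction with (1), together with the size comparison $n_1+n_i=g+(n_1+f_1-f_i)\ge n_2+n_i$, closes the argument.
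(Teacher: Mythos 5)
Your proposal is correct and follows essentially the same joint induction on $i$ as the paper, with an identical base case and an identical argument for part (1) (subtracting $f_1-f_i=n_i-n_1$ and using $f_i-f\notin S$ to kill the coefficient $a_i$). The step for part (2) is organized a little differently — you case-split on whether a factorization of $g=n_i+f_i-\F(S)$ involves $n_1$, using indecomposability of the minimal generator $n_i$ in one case and a size comparison in the other, whereas the paper directly adds the two factorizations of $f_1-f_i$ and $f_i-f_1$ to get $n_1+n_i$ and compares magnitudes — but both versions rest on the same two ingredients (the NG-property giving $n_i+f_i-f_1\in S$ and the inductive statement (1) giving $n_1+f_1-f_i\in\langle n_i,\dots,n_\nu\rangle$) and the same numerical comparison.
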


\begin{proof}
We proceed by induction on $i$. If $i=1$, (2) is trivial; moreover, if $f \neq f_1$, then $f_1-f \notin S$ and $f_1-f+n_1 \in S$, thus (1) follows.  

We assume that both statements hold for $i$ and we will prove them for $i+1$. We start proving (2). Since $f_{i+1} \notin \{f_1, \dots, f_i\}$, by induction we have $f_1 - f_{i+1}=-n_1 + a_{i+1}n_{i+1} + \dots + a_{\nu}n_\nu$. Moreover, $f_{i+1}-f_1 \notin S$ and $f_{i+1}-f_1+n_{i+1} \in S$, then $f_{i+1}-f_1 =c_1 n_1 + \dots + c_{i}n_i - n_{i+1} + \dots + c_\nu n_\nu$, with $c_j$ non-negative integer for every $j$. It follows that $n_1+n_{i+1}=c_1 n_1 + \dots + c_{i}n_i + a_{i+1}n_{i+1}+(a_{i+2} + c_{i+2})n_{i+2} + \dots (a_\nu + c_\nu)n_\nu$; clearly at least one $a_j$ and one $c_k$ are non-zero and, since $n_1 < \dots < n_\nu$, it follows that $c_1=a_{i+1}=1$ and the other coefficients are zero. Hence, $f_1-f_{i+1}=n_{i+1}-n_1$, which is (2).

Let now $f \in \PF(S) \setminus \{f_1, \dots, f_{i+1}\}$. By induction $f_1-f=-n_1+a_{i+1}n_{i+1}+ \dots + a_\nu n_\nu$ and, since $f_{i+1} - f = (f_1-f)+(f_{i+1}-f_1)$ using (2) it follows that $f_{i+1}-f=(a_{i+1}-1)n_{i+1}+ \dots + a_\nu n_\nu$. Therefore, $a_{i+1}=0$ because $f_{i+1}-f \notin S$ and $(1)$ follows.
\end{proof}

\begin{corollary} \label{Distinct}
If $S$ is nearly Gorenstein, there exist at least two different indices $i$ and $j$ such that $f_i=f_j$. Moreover, if $f_1, \dots, f_{\nu-1}$ are pairwise distinct, then $\PF(S)=\{f_1, \dots, f_{\nu-1}\}$ and $t(S)=\nu-1$.
\end{corollary}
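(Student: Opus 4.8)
My plan is to treat the two assertions separately, in each case applying Proposition~\ref{f1-fj} with the largest admissible value of $i$ and then reading off a contradiction inside the canonical ideal $K(S)$.

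\emph{Existence of a repeated component.} Suppose, for contradiction, that $f_1,\dots,f_\nu$ are pairwise distinct. Proposition~\ref{f1-fj} with $i=\nu$ gives $f_j=\F(S)-n_j+n_1$ for every $j$, and its part (1) shows that $\PF(S)$ has no element outside $\{f_1,\dots,f_\nu\}$: such an $f$ would satisfy $\F(S)-f=-n_1$, i.e.\ $f=\F(S)+n_1>\F(S)$, which is impossible. Hence $\{\F(S)-f\mid f\in\PF(S)\}=\{n_j-n_1\mid 1\le j\le\nu\}$, so $K(S)=\bigcup_{j=1}^{\nu}(n_j-n_1+S)$ and therefore $n_1+K(S)=\bigcup_{j=1}^{\nu}(n_j+S)=M(S)$, since every nonzero element of $S$ is a sum of minimal generators. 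As $\F(S)+n_1>\F(S)$ we have $\F(S)+n_1\in M(S)=n_1+K(S)$, whence $\F(S)\in K(S)$; but $\F(S)\notin K(S)$, because $\F(S)-\F(S)=0\in S$. This contradiction proves the first assertion.

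\emph{The equality $\PF(S)=\{f_1,\dots,f_{\nu-1}\}$.} Assume $f_1,\dots,f_{\nu-1}$ pairwise distinct. By the first assertion some two of the $f_i$ coincide, so necessarily $f_\nu=f_k$ for some $k\le\nu-1$; thus $\{f_1,\dots,f_\nu\}=\{f_1,\dots,f_{\nu-1}\}$, which by Proposition~\ref{f1-fj}(2) with $i=\nu-1$ equals $\{\F(S)-n_j+n_1\mid 1\le j\le\nu-1\}$ and has $\nu-1$ elements. It remains to exclude any $f\in\PF(S)\setminus\{f_1,\dots,f_{\nu-1}\}$. By Proposition~\ref{f1-fj}(1) such an $f$ satisfies $\F(S)-f=an_\nu-n_1$ for some $a\ge1$ (the value $a=0$ is ruled out by $f\le\F(S)$), and there is at most one such $f$, since the difference of two of them would be a positive multiple of $n_\nu\in S$, which cannot happen for two distinct pseudo-Frobenius numbers. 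If $a=1$, a direct check shows that $(f_1,\dots,f_{\nu-1},f)$ is an NG-vector --- indeed $n_i+f_i-f_j=n_j\in S$ and $n_i+f_i-f=n_\nu\in S$ for $i\le\nu-1$, while $n_\nu+f-f_j=n_j\in S$ and $n_\nu+f-f=n_\nu\in S$ --- with $\nu$ pairwise distinct components, contradicting the first assertion; hence $\PF(S)=\{f_1,\dots,f_{\nu-1}\}$ and $t(S)=\nu-1$.

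\emph{The main obstacle.} What is left is the case $a\ge2$, in which $f=\F(S)+n_1-an_\nu$ cannot be appended to the NG-vector, so the previous trick fails. Here I would extract more structure from $f\in\PF(S)$: adding positive multiples of $n_\nu$ yields $\F(S)+n_1-\ell n_\nu\in S$ for $0\le\ell\le a-1$, and writing $\F(S)=(f+\ell n_\nu)+\bigl((a-\ell)n_\nu-n_1\bigr)$ for $1\le\ell\le a$ forces $jn_\nu-n_1\notin S$ for $1\le j\le a$ (otherwise $\F(S)\in S$); moreover $\F(S)+n_1-(a-1)n_\nu\in S$ while $\F(S)+n_1-an_\nu=f\notin S$ and $\F(S)-(a-1)n_\nu\notin S$, so every factorization of $\F(S)+n_1-(a-1)n_\nu$ avoids both $n_1$ and $n_\nu$, i.e.\ it lies in $\langle n_2,\dots,n_{\nu-1}\rangle$. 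I would then play these relations off against the constraints coming from the required last component $f_\nu=f_k$ (namely $n_\nu-n_k+n_j\in S$ for all $j\le\nu-1$ and $(a+1)n_\nu-n_k\in S$) and, if necessary, against the count $|S\setminus(n_1+K(S))|=\F(S)+n_1+1-2g$ (with $g$ the genus of $S$) together with the inclusion $S\setminus(n_1+K(S))\subseteq\{0,n_\nu,2n_\nu,\dots,(a-1)n_\nu\}$, so as to rule out $a\ge2$ and reduce to the already settled case. Closing this last case cleanly is what I expect to be the crux of the argument.
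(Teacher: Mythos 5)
Your proof of the first assertion (via $K(S)=\bigcup_j(n_j-n_1+S)$, hence $n_1+K(S)=M(S)$, hence $\F(S)\in K(S)$) is correct and is a genuinely different, rather elegant route; the paper instead obtains the first assertion as a byproduct of the second. Your reduction of the second assertion to a single possible extra element $f$ with $\F(S)-f=an_\nu-n_1$, $a\ge 1$, is also correct, as is the elimination of $a=1$ by exhibiting the NG-vector $(f_1,\dots,f_{\nu-1},f)$ with distinct components. But the case $a\ge 2$ is left open — you say so yourself — and the relations you list for it (membership of $\F(S)+n_1-\ell n_\nu$ in $S$, genus counts, constraints on $f_\nu=f_k$) do not visibly close it. So the proposal as written has a genuine gap.

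The missing idea is short and makes the case split on $a$ unnecessary. Since $f\in\PF(S)$ and $f\notin S$, the element $f+n_\nu$ lies in $S$ and \emph{every} factorization of it avoids $n_\nu$ (otherwise $f\in S$); write $f+n_\nu=\sum_{k<\nu}b_kn_k$ and pick $k$ with $b_k>0$. From Proposition~\ref{f1-fj} you already have $f_1-f=-n_1+an_\nu$ and $f_1-f_k=n_k-n_1$, hence $f_k-f=an_\nu-n_k$. Then
\[
f_k=(f_k-f)+f=(a-1)n_\nu+\sum_{j\neq k}b_jn_j+(b_k-1)n_k,
\]
which has all coefficients non-negative for every $a\ge 1$, so $f_k\in S$ — contradicting $f_k\in\PF(S)$. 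This single computation disposes of every $a\ge 1$ at once and also yields the first assertion (take $f=f_\nu$), which is exactly how the paper argues.
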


\begin{proof}
Suppose by contradiction that there exists $f \in \PF(S) \setminus \{f_1, \dots, f_{\nu-1}\}$. The two statements of the previous proposition imply that $f_1-f=-n_1+a_{\nu}n_{\nu}$ with $a_{\nu} >0$ and $f_1-f_i=n_i-n_1$ for every $i< \nu$. Therefore,
$f_i-f=(f_1-f)-(f_1-f_i)=a_\nu n_\nu - n_i$ for every $i < \nu$. 
Consider a factorization $f+n_\nu = b_1 n_1 + \dots + b_{\nu-1}n_{\nu-1}$, where $b_i \geq 0$ and assume $b_k >0$ for a fixed $k$. Then 
$f_k=(f_k-f)+f= (a_\nu -1) n_\nu + b_1 n_1 + \dots + (b_{k}-1) n_k + \dots + b_{\nu-1} n_{\nu -1} \in S$, which is a contradiction. Hence, $\PF(S)=\{f_1, \dots, f_{\nu-1}\}$ and, in particular, it is not possible that $f_1, \dots, f_\nu$ are pairwise distinct.
\end{proof}

\begin{example} \rm
Consider $S=\langle 10, 11, 12, 14, 16, 29 \rangle$ that is nearly Gorestein with $f_1=19$, $f_2=18$, $f_3=17$, $f_4=15$, $f_5=13$ and $f_6 \in \{f_1,f_2,f_3,f_4,f_5\}$. According to the previous corollary $\PF(S)=\{f_1, f_2, f_3, f_4, f_5\}$ and $t(S)=5$.   
\end{example}

\begin{remark} \rm \label{type 9}
{\bf 1.} Despite Theorem \ref{t<4} and Corollary \ref{Distinct}, the type of a nearly Gorenstein semigroup can be greater than its embedding dimension. For instance the nearly Gorenstein numerical semigroup $S=\langle 64, 68, 73, 77, 84, 93 \rangle$ has embedding dimension $6$ and type $9$, since the pseudo-Frobenius numbers of $S$ are $\PF(S)=\{159, 179, 188, 195, 197, 206, 215, 394, 403 \}$. \\
{\bf 2.} Let $S$ be nearly Gorenstein. If either $S$ has embedding dimension four or $f_1, \dots, f_{\nu-1}$ are pairwise distinct, then $S$ satisfies Wilf's conjecture \cite{D}. Indeed $\F(S)+1 \leq n(S)(t(S)+1) \leq n(S)\nu(S)$, where the first inequality follows by \cite[Theorem 20]{FGH}.
\end{remark}

\section{Gluing and generalized arithmetic sequences}

Let $S_1=\langle n_1, \dots, n_{\nu} \rangle$ and $S_2=\langle m_1, \dots, m_{\mu} \rangle$ be two numerical semigroups and let $x \in S_2 \setminus \G(S_2)$, $y \in S_1 \setminus \G(S_1)$ be two coprime integers. The gluing of $S_1$ and $S_2$ with respect to $x$ and $y$ is the numerical semigroup $\langle xS_1, yS_2 \rangle=\langle xn_1, \dots, xn_{\nu},ym_1, \dots, ym_{\mu} \rangle$. It is well-known that $\langle xS_1, yS_2 \rangle$ is symmetric if and only if both $S_1$ and $S_2$ are symmetric and Nari \cite[Theorem 6.7]{N} proved that $\langle xS_1, yS_2 \rangle$ is never almost symmetric if it is not symmetric. In the next proposition we extend this result to the nearly Gorenstein case.

\begin{proposition} \label{gluing}
Let $S_1$ and $S_2$ be two numerical semigroups and assume that at least one of them is not symmetric. Then, every gluing of $S_1$ and $S_2$ is not nearly Gorenstein.
\end{proposition}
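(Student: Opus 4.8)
The plan is to argue by contradiction using the NG-vector characterisation of Proposition~\ref{Nearly Gorenstein}(2), combined with the standard description of the pseudo-Frobenius numbers of a gluing. Write $S=\langle xS_1,yS_2\rangle$ with $S_1=\langle n_1,\dots,n_\nu\rangle$, $S_2=\langle m_1,\dots,m_\mu\rangle$, $x\in S_2\setminus\G(S_2)$, $y\in S_1\setminus\G(S_1)$ and $\gcd(x,y)=1$. Since a gluing is symmetric in its two factors (the equality $\langle xS_1,yS_2\rangle=\langle yS_2,xS_1\rangle$ also exhibits $S$ as a gluing of $S_2$ and $S_1$, as $y\in S_1\setminus\G(S_1)$, $x\in S_2\setminus\G(S_2)$ and $\gcd(y,x)=1$), I may assume that $S_1$ is the factor which is not symmetric. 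I will use the following standard facts about the gluing: the minimal generators of $S$ are exactly $xn_1,\dots,xn_\nu,ym_1,\dots,ym_\mu$; an integer $w$ lies in $S$ if and only if $w=xa+yb$ for some $a\in S_1$, $b\in S_2$; and $\PF(S)=\{xp+yq+xy\mid p\in\PF(S_1),\ q\in\PF(S_2)\}$ (so in particular $\F(S)=x\F(S_1)+y\F(S_2)+xy$). I will also repeatedly use that $0<m_1<x$ — because $x$ is a positive element of $S_2$ which is not a minimal generator, hence $x$ strictly exceeds the least positive element $m_1$ — and that $xt\in S_2$, $yt\in S_1$ for all $t\ge 0$, since $x\in S_2$ and $y\in S_1$.

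Suppose for contradiction that $S$ is nearly Gorenstein. By Proposition~\ref{Nearly Gorenstein}(2) applied to the minimal generator $ym_1$, there is $g\in\PF(S)$ with $ym_1+g-f\in S$ for every $f\in\PF(S)$; write $g=xp+yq+xy$ with $p\in\PF(S_1)$, $q\in\PF(S_2)$. The first step is to show $p=\F(S_1)$. Applying the condition with $f=x\F(S_1)+yq+xy\in\PF(S)$ gives $ym_1-xr\in S$ where $r:=\F(S_1)-p\ge 0$. If $r\ge 1$, then writing $ym_1-xr=xa+yb$ with $a\in S_1$, $b\in S_2$ yields $x(r+a)=y(m_1-b)$; since $\gcd(x,y)=1$ and $r+a>0$, one gets $m_1-b=xt$ with $t\ge 1$, whence $b=m_1-xt\le m_1-x<0$, contradicting $b\in S_2$. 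Hence $r=0$, i.e.\ $p=\F(S_1)$.

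For the contradiction I would now use that $S_1$ is not symmetric: there exists $p'\in\PF(S_1)$ with $p'<\F(S_1)$, and the condition applied to $f'=xp'+yq+xy\in\PF(S)$ gives $ym_1+xr'\in S$ with $r':=\F(S_1)-p'\ge 1$. On the one hand $r'=\F(S_1)-p'\notin S_1$: otherwise $r'$ is a nonzero element of $S_1$ and, since $p'\in\PF(S_1)$, we would get $\F(S_1)=p'+r'\in S_1$, which is absurd. On the other hand, writing $ym_1+xr'=xa+yb$ with $a\in S_1$, $b\in S_2$, the relation $x(r'-a)=y(b-m_1)$ together with $\gcd(x,y)=1$ forces $b-m_1=xt$ and $r'-a=yt$ for some $t\in\Z$, and $b=m_1+xt\ge 0$ together with $0<m_1<x$ forces $t\ge 0$; but then $r'=a+yt\in S_1$, a contradiction. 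Therefore no admissible $g$ exists for the generator $ym_1$, so $S$ has no NG-vector and is not nearly Gorenstein.

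The step I expect to require the most care is invoking the gluing formula for $\PF(S)$ with precise references — in particular the inclusion that \emph{every} pseudo-Frobenius number of $S$ has the form $xp+yq+xy$, which is what pins down the shape of $g$. The two membership computations ``$ym_1\mp xr\in S$'' are elementary but must be carried out watching the signs; it is precisely the strict inequality $m_1<x$ that prevents ``borrowing'' a copy of $x$ from the $S_2$-side, and this is what generates the contradiction, so the choice of the generator $ym_1$ (rather than some $xn_i$, for which the analogous inequality $n_i<y$ may fail) is essential.
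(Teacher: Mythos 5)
Your proof is correct and follows essentially the same route as the paper's: both apply the NG-vector characterization (Proposition \ref{Nearly Gorenstein}(2)) to the scaled multiplicity of one factor, invoke the gluing formula for $\PF(S)$, and derive the contradiction from the inequality between that multiplicity and the gluing parameter via a $\gcd(x,y)=1$ divisibility argument. The only differences are cosmetic: you swap the roles of $S_1$ and $S_2$ (the paper takes $S_2$ non-symmetric and works with the generator $x n_1$, varying the $\PF(S_2)$-component), and you insert an intermediate step pinning down $p=\F(S_1)$, which the paper sidesteps by comparing only against pseudo-Frobenius numbers sharing the same first component as $g$.
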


\begin{proof}
Assume that $S_2$ is not symmetric and let $m$ be the multiplicity of $S_1$, i.e. its smallest minimal generator. Suppose that $S=\langle xS_1, yS_2 \rangle$ is nearly Gorenstein with $x \in S_2 \setminus \G(S_2)$, $y \in S_1 \setminus \G(S_1)$ and $\gcd(x,y)=1$. It is well-known that $\PF(S)= \{xf +yf'+xy \mid f \in \PF(S_1), f' \in \PF(S_2)\}$, see \cite[Proposition 6.6]{N}. Then, by Proposition \ref{Nearly Gorenstein} there exists $xf +yf'+xy \in \PF(S)$ such that for every $g \in \PF(S_2)$ we have $xm + xf+yf'+xy -(xf+yg+xy)=xm+yf'-yg \in S$.  Since $S_2$ is not symmetric we can fix $g \in \PF(S_2) \setminus \{f'\}$.
Let $xm+ yf'-yg=xs_1+ys_2$ with $s_1 \in S_1$ and $s_2 \in S_2$. If $s_1=0$, then from $\gcd(x,y)=1$ it follows that $y$ divides $m$, but this impossible because $m < y$. Therefore, $y(f'-g-s_2)=x(s_1-m)\geq 0$ and, thus, $f'-g-s_2= \lambda x$ with $\lambda \in \mathbb{N}$. Since $x \in S_2$ and $g \in \PF(S_2)$, this implies that $f'=s_2+\lambda x +g \in S_2$, which is a contradiction.
\end{proof}

In literature there exists a construction that is a variation of the gluing when one semigroup is $\mathbb{N}$. More precisely, if $S=\langle n_1, \dots, n_\nu \rangle$ and $d \in \mathbb N$ such that $\gcd(d, n_\nu)=1$, we are interested in the semigroup $T=\langle dn_1, \dots, dn_{\nu-1}, n_\nu \rangle$. The numerical semigroup $T$ is symmetric if and only if $S$ is symmetric, see \cite[Proposition 8]{FGH}, whereas Numata \cite{Nu} proved that $T$ is never almost symmetric when $S$ is not symmetric. In the next proposition we show that this result holds also for the nearly Gorenstein property. We first recall that the pseudo-Frobenius numbers of $T$ are
\[
\PF(T)=\{df+(d-1)n_{\nu} \mid f \in \PF(S)\},
\]
in particular $\F(T)=d\F(S)+(d-1)n_{\nu}$, see \cite[Proposition 3.2]{Nu}.

\begin{proposition} \label{Numata}
Let $S=\langle n_1, \dots, n_\nu \rangle$ be a numerical semigroup which is not symmetric. If $d$ is a positive integer coprime to $n_\nu$, then $T=\langle dn_1, \dots, dn_{\nu-1},n_\nu \rangle$ is not nearly Gorenstein. 
\end{proposition}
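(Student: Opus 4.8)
The plan is to adapt the proof of Proposition~\ref{gluing}: assume $T$ is nearly Gorenstein, invoke the characterization of Proposition~\ref{Nearly Gorenstein}(2), and combine it with the coprimality $\gcd(d,n_\nu)=1$ and the explicit list $\PF(T)=\{df+(d-1)n_\nu\mid f\in\PF(S)\}$ to produce a contradiction. The key decision is to test the nearly Gorenstein condition on the distinguished generator $n_\nu$: since $d\ge 2$ (which is implicit in the construction, $d=1$ giving $T=S$), it is the only generator of $T$ not divisible by $d$, and this is exactly what drives the argument. A preliminary observation is that $n_\nu$ is indeed a minimal generator of $T$: from $n_\nu=\sum_{i=1}^{\nu-1}a_i(dn_i)+bn_\nu$ with $a_i,b\in\N$ one gets $(1-b)n_\nu=d\sum_i a_i n_i$, so $b\le 1$, and $b=0$ is excluded because it would force $d\mid n_\nu$; hence $b=1$ and all $a_i=0$.

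Next I would apply Proposition~\ref{Nearly Gorenstein}(2) to $n_\nu$: this gives some $f_0\in\PF(S)$ such that $n_\nu+df_0+(d-1)n_\nu-F\in T$ for every $F\in\PF(T)$. Since $S$ is not symmetric, $\PF(S)$ has at least two elements, so I may pick $f\in\PF(S)$ with $f\ne f_0$; substituting $F=df+(d-1)n_\nu$ in the previous relation yields
\[
n_\nu+d(f_0-f)\in T .
\]
The heart of the proof is then to write this element as $\sum_{i=1}^{\nu-1}a_i(dn_i)+bn_\nu$ with $a_i,b\in\N$ and rearrange to $(1-b)n_\nu=d\big(\sum_i a_i n_i-f_0+f\big)$. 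Coprimality of $d$ and $n_\nu$ forces $d\mid(1-b)$; again $b=0$ would give the impossible $d\mid n_\nu$, so $b\ge 1$ and $b-1=\lambda d$ for some $\lambda\ge 0$, whence back-substitution gives $f_0-f=\sum_i a_i n_i+\lambda n_\nu\in S$.

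To conclude: if $f_0-f=0$ this contradicts $f\ne f_0$; and if $f_0-f\in S\setminus\{0\}$, then from $f\in\PF(S)$ we obtain $f_0=f+(f_0-f)\in S$, contradicting $f_0\in\PF(S)$. Hence $T$ is not nearly Gorenstein. The step I expect to need the most care is the bookkeeping of signs: the relation $n_\nu+d(f_0-f)\in T$ presupposes this quantity is a non-negative integer, but if $f>f_0$ then a negative value already contradicts membership in $T\subseteq\N$, so that subcase is not lost and in fact closes the argument immediately. Everything else is a routine manipulation of the pseudo-Frobenius formula for $T$, exactly parallel to Proposition~\ref{gluing}.
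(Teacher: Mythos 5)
Your proof is correct and follows essentially the same route as the paper's: apply Proposition~\ref{Nearly Gorenstein}(2) to the generator $n_\nu$, use the formula for $\PF(T)$ to reduce the hypothesis to $n_\nu+d(f_0-f)\in T$, and exploit $\gcd(d,n_\nu)=1$ to force $f_0-f\in S$, contradicting $f_0\in\PF(S)$. Your explicit observation that $d\ge 2$ must be implicit (the statement fails for $d=1$, where $T=S$) is a point the paper only uses tacitly in asserting that ``$d$ does not divide $n_\nu$,'' and your preliminary check that $n_\nu$ is a minimal generator of $T$ is harmless extra care.
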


\begin{proof}
Suppose by contradiction that $T$ is nearly Gorenstein. By Proposition \ref{Nearly Gorenstein} there exists $f \in \PF(S)$ such that $n_\nu +df-dg \in T$ for all $g \in \PF(S)$ and, since $S$ is not symmetric, we can fix $g \neq f$. Therefore, $n_\nu + df -dg = ds + \lambda n_\nu$ with $s \in S$ and $\lambda >0$, since $d$ does not divide $n_\nu$.
Then, $d(f-g-s)=(\lambda - 1) n_{\nu}$ and, since $\gcd(d, n_\nu)=1$, this implies that $f-g-s=\gamma n_\nu$ with $\gamma \geq 0$. Hence, $f=g+s+\gamma n_{\nu} \in S$ because $f \neq g$ and this yields a contradiction.
\end{proof}

The next result is a nice consequence of Propositions \ref{gluing} and \ref{Numata} and it was proved by Numata \cite{Nu} in the almost symmetric case.

\begin{corollary} \label{coprime}
Let $T$ be a nearly Gorenstein numerical semigroup which is not symmetric and assume that it is minimally generated by $n_1, \dots, n_\nu$. Then, the elements  of $\{n_1, \dots, \widehat{n_i}, \dots n_\nu \}$ are relatively coprime for every $i$.
\end{corollary}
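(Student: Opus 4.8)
The plan is to argue by contradiction. Suppose that for some index $i$ the integers of $\{n_1, \dots, \widehat{n_i}, \dots, n_\nu\}$ are not relatively coprime, and set $e=\gcd(n_j\mid j\neq i)>1$. The aim is to recognize $T$ as a semigroup obtained from a smaller numerical semigroup by the construction preceding Proposition \ref{Numata} (with $d=e$), and from this to deduce, against the hypothesis, that $T$ cannot be nearly Gorenstein.

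Here are the steps I would carry out. First, since $T$ is a numerical semigroup we have $1=\gcd(n_1,\dots,n_\nu)=\gcd(e,n_i)$, so $e$ and $n_i$ are coprime and, as $e>1$, $e\nmid n_i$; moreover $\gcd(n_j/e\mid j\neq i)=1$. Next, put $p_\nu=n_i$, let $p_1,\dots,p_{\nu-1}$ be the integers $n_j/e$ with $j\neq i$, and consider $S=\langle p_1,\dots,p_{\nu-1},p_\nu\rangle$, which is a numerical semigroup because $\gcd(p_1,\dots,p_{\nu-1})=1$. By construction $T=\langle ep_1,\dots,ep_{\nu-1},p_\nu\rangle$, so $T$ is precisely the semigroup considered in Proposition \ref{Numata} for this $S$ and $d=e$. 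Since $T$ is not symmetric, neither is $S$: this follows from \cite[Proposition 8]{FGH}, or equivalently from the fact that $f\mapsto ef+(e-1)p_\nu$ is a bijection $\PF(S)\to\PF(T)$, so $t(S)=t(T)$, which is at least $2$ because $T$ is not symmetric. As $\gcd(e,p_\nu)=1$, Proposition \ref{Numata} then yields that $T$ is not nearly Gorenstein, which is the desired contradiction. (When in addition $n_i\in\langle n_j/e\mid j\neq i\rangle$, one may alternatively present $T$ as a genuine gluing of the non-symmetric semigroup $\langle n_j/e\mid j\neq i\rangle$ with $\mathbb{N}$, via $x=e$ and $y=n_i$, and invoke Proposition \ref{gluing} instead.)

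The point that requires care, and the natural candidate for the main obstacle, is that the auxiliary presentation $S=\langle p_1,\dots,p_\nu\rangle$ need not be minimal, since $p_\nu=n_i$ may fail to be a minimal generator of $S$. What has to be observed is that this is harmless: the proof of Proposition \ref{Numata} (and the formula $\PF(T)=\{df+(d-1)n_\nu\mid f\in\PF(S)\}$ on which it relies) only uses that $n_\nu=p_\nu$ is a minimal generator of $T$, which holds by hypothesis, together with the coprimality $\gcd(d,n_\nu)=\gcd(e,n_i)=1$ and the non-symmetry of $S$, both of which have already been established. Once one is satisfied on this point, the argument closes with no further computation.
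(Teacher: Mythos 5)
Your proof is correct and takes essentially the same route as the paper: reduce modulo the common divisor $e$ of the $\nu-1$ remaining generators and recognize $T$ as the construction $\langle ep_1,\dots,ep_{\nu-1},p_\nu\rangle$ of Proposition \ref{Numata}. The only divergence is in the degenerate case you rightly flag, namely $p_\nu=n_i\in\langle p_1,\dots,p_{\nu-1}\rangle$: the paper splits into two cases and disposes of this one via Proposition \ref{gluing}, presenting $T$ as the gluing $\langle e S', n_i\mathbb N\rangle$ of $S'=\langle p_1,\dots,p_{\nu-1}\rangle$ with $\mathbb N$, whereas you absorb it into Proposition \ref{Numata} by asserting that the formula $\PF(T)=\{ef+(e-1)p_\nu\mid f\in\PF(S)\}$, and hence the proposition, survives when $p_\nu$ is not a minimal generator of $S$. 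That assertion is true --- in the degenerate case the formula is exactly the gluing formula \cite[Proposition 6.6]{N} with $\PF(\mathbb N)=\{-1\}$, once one checks $n_i\notin\G(S')$ (which holds, since otherwise some $n_j=en_i$ would fail to be a minimal generator of $T$) --- but it goes slightly beyond the statement quoted from \cite{Nu}, so the paper's explicit case split is the more economical way to stay within cited results; mathematically your argument is sound.
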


\begin{proof}
It is enough to assume $i=\nu$. Suppose by contradiction that $\gcd(n_1, \dots, n_{\nu-1})=d >1$ and let $S=\langle n_1/d, \dots, n_{\nu-1}/d \rangle$. If $n_{\nu} \notin S$ we get a contradiction by Proposition \ref{Numata}. Hence, $S$ is the gluing $\langle dS, n_\nu \mathbb N \rangle$ and Proposition \ref{gluing} yields a contradiction.  
\end{proof}

A numerical semigroup generated by a generalized arithmetic sequence has the form $S=\langle a,sa+d, sa+2d, \dots, sa+nd \rangle$ for some positive integers $a,s,d,n$ such that $\gcd(a,d)=1$. It is known that in this case $S$ is symmetric if and only if $a \equiv 2 \mod n$, see \cite{EL,Ma}, while in \cite[Corollary 3.3]{Nu2} Numata proved that it is almost symmetric if and only if either $s=1$ and $S$ has maximal embedding dimension or it is symmetric. Moreover, in the arXiv version of \cite{HHS} it is proved that $S$ is always nearly Gorenstein provided that $s=1$.

\begin{proposition}
Let $S=\langle a,sa+d,\dots, sa+nd \rangle$ be a numerical semigroup generated by a generalized arithmetic sequence. It is nearly Gorenstein if and only if $s=1$ or $a \equiv 2 \mod n$.
\end{proposition}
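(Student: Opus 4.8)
The plan is to prove the two implications separately; the ``if'' direction is essentially a matter of assembling facts already recalled. If $a\equiv 2\pmod n$, then $S$ is symmetric by \cite{EL,Ma}, hence Gorenstein, hence nearly Gorenstein; and if $s=1$, then $S$ is nearly Gorenstein by the result of the arXiv version of \cite{HHS} quoted above. So the content lies in the converse, which I would prove in contrapositive form: \emph{if $s\ge 2$ and $a\not\equiv 2\pmod n$, then $S$ is not nearly Gorenstein.} One may assume $a>n$ (in particular $a\ge 2$): if $a\le n$ then $sa+id=(sa+(i-a)d)+d\cdot a\in S$ for $a<i\le n$ and $sa+ad=(s+d)a\in S$, so $S$ is already generated by the shorter generalized arithmetic sequence $a,sa+d,\dots,sa+(a-1)d$, and since $a\not\equiv 2\pmod{a-1}$ for $a\ge 3$ while $a\le 2$ makes the easy direction apply, this reduces to the case $a>n$.

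Assuming $a>n$, I would bring in the Ap\'ery set of $S$ with respect to $a$: with $q(j):=\lceil j/n\rceil$ it is well known (see e.g. \cite{Ma,Nu2}) that $\operatorname{Ap}(S,a)=\{\,q(j)sa+jd\mid 0\le j\le a-1\,\}$, the element $q(j)sa+jd$ being the least element of $S$ congruent to $jd$ modulo $a$. Since $q(j)sa+jd$ is increasing in $j$, the largest element of $\operatorname{Ap}(S,a)$ is $q(a-1)sa+(a-1)d$, so $\F(S)=q(a-1)sa+(a-1)d-a$. The point of the hypothesis $a\not\equiv 2\pmod n$ is exactly that then $q(a-2)=q(a-1)$, so $\F(S)-d=q(a-2)sa+(a-2)d-a$, which lies outside $S$ because $q(a-2)sa+(a-2)d\in\operatorname{Ap}(S,a)$. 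On the other hand $\F(S)-d+n_j\in S$ for every minimal generator $n_j$: for $n_j=a$ this is just $q(a-2)sa+(a-2)d\in S$, and for $n_j=sa+id$ with $1\le i\le n$ one rewrites $\F(S)-d+n_j=\F(S)+(sa+(i-1)d)$ where $sa+(i-1)d\in S\setminus\{0\}$ (it is $sa$ if $i=1$ and the generator $sa+(i-1)d$ if $i\ge 2$). Hence $f:=\F(S)-d$ is a pseudo-Frobenius number of $S$, with $\F(S)-f=d$.

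To conclude, suppose for contradiction that $S$ is nearly Gorenstein and fix an NG-vector $\f=(f_1,\dots,f_\nu)$. Since $a$ is the smallest minimal generator, Proposition \ref{minimum index}(1) forces $f_1=\F(S)$, so the defining property of the NG-vector in the first coordinate gives $a+\F(S)-g\in S$ for every $g\in\PF(S)$; applied to $g=f$ this reads $a+d\in S$. But this is impossible for $s\ge 2$: if $a+d=\alpha a+\sum_{i=1}^n\beta_i(sa+id)$ with $\alpha,\beta_i\in\N$, then reducing modulo $a$ forces $\sum_i i\beta_i\equiv 1\pmod a$, so not all $\beta_i$ vanish (otherwise $a\mid d$, against $\gcd(a,d)=1$), whence $\sum_i\beta_i(sa+id)\ge sa+d>a+d$, a contradiction. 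Therefore $S$ is not nearly Gorenstein, and the proposition follows.

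I expect the main obstacle to be the second paragraph, namely securing the description of $\operatorname{Ap}(S,a)$ and, through it, recognizing $\F(S)-d$ as a pseudo-Frobenius number when $a\not\equiv 2\pmod n$. After that the argument is a short application of Propositions \ref{Nearly Gorenstein} and \ref{minimum index} together with the elementary fact that $s\ge 2$ forces $a+d\notin S$; the reduction to $a>n$ is only minor bookkeeping, since the remaining range produces semigroups that are symmetric or effectively have $s=1$.
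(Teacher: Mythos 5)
Your proof is correct and follows essentially the same route as the paper: both hinge on showing that $\F(S)-d\in\PF(S)$ in the non-symmetric case, invoking Proposition \ref{minimum index} to get $f_1=\F(S)$ and hence $a+d\in S$, which forces $s=1$, with the converse handled by the symmetric criterion and the $s=1$ result from the arXiv version of \cite{HHS}. The only difference is that you re-derive the membership $\F(S)-d\in\PF(S)$ from scratch via the Ap\'ery set of $a$ (and add the reduction to $a>n$), where the paper simply cites the proof of \cite[Lemma 2.7]{Ma} and \cite[Theorem 3.1]{Nu2}.
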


\begin{proof}
We can exclude the symmetric case. By the proof of \cite[Lemma 2.7]{Ma} (see also \cite[Theorem 3.1]{Nu2}), $\F(S)-d$ is a pseudo-Frobenius number of $S$. If $S$ is nearly Gorenstein, then $f_1=\F(S)$ by Proposition \ref{minimum index} and, so, $a+\F(S)-(\F(S)-d)=a+d \in S$, that is possible only if $s=1$. Hence, the statement follows from Proposition 7.1 of the arXiv version of \cite{HHS}. 
\end{proof}

\section{Further questions and open problems}

In this section we collect some open problems. We start recalling the question raised by the first author in \cite{M} for the almost symmetric case and by Stamate \cite{S} in general.

\begin{question}
Is there an upper bound for the type of $S$ in terms of the embedding dimension of $S$ when $S$ is almost symmetric or nearly Gorenstein? 
\end{question}

To the best of our knowledge no almost symmetric semigroups $S$ for which $t(S)\geq 2\nu(S)$ are known, even though there are almost symmetric semigroups $S$ with embedding dimension $6$ satisfying $t(S)=2 \nu(S)-1$, for instance $S=\langle 111,115,122,126,135,146 \rangle$. 
Also, there exist nearly Gorenstein numerical semigroups which are not almost symmetric having embedding dimension $6$ and type $9$, cf. Remark \ref{type 9}. On the other hand some computations suggest that the inequality $t(S) \leq \nu(S)$ could hold if $\nu(S) < 6$. More precisely we pose the following question:

\begin{question}
Let $S$ be a nearly Gorenstein numerical semigroup with embedding dimension five. Is it true that $t(S)\leq 5$ and that the equality is attained only if $S$ is almost symmetric?
\end{question}

Let $S=\langle n_1, \dots, n_{\nu}\rangle$ and let $k$ be a field. The map $\varphi: k[x_1,\dots, x_\nu] \rightarrow k[S]$ defined as $\varphi(x_i)=t^{n_i}$ is surjective and its kernel $I_S$ is said to be the defining ideal of $S$. Clearly $k[S] \cong k[x_1,\dots, x_\nu]/I_S$.
The defining ideals of almost symmetric semigroups with embedding dimension $3$ or $4$ and type $1$ and $2$ are well-known, see \cite{B,H,RG,K} or the survey \cite{S}. Also in the case with embedding $4$ and type $3$ the defining ideal has been recently found in \cite{E,HW}. As for the nearly Gorenstein case, the defining ideal of $S$ is essentially described in the arXiv version of \cite{HHS} when $\nu(S)=3$. In \cite[Question 9.8]{S} Stamate asks for the generators and the resolution of the defining ideal $I_S$ when $\nu(S)=4$ and $S$ is nearly Gorenstein. We raise a more precise question on the number of its minimal generators. This is equivalent to ask for all the Betti numbers of $k[x_1,\dots,x_4]/I_S$ because its projective dimension is $3$.

\begin{question}
Let $S$ be a nearly Gorenstein numerical semigroup which is not almost symmetric and let $\nu(S)=4$. Are the following statements true?
\begin{enumerate}
\item If $t(S)=2$, then the defining ideal of $S$ has either $4$ or $5$ generators.
\item If $t(S)=3$, then the defining ideal of $S$ has $6$ generators.
\end{enumerate}
Equivalently, the possible Betti sequences of $k[x_1,\dots,x_4]/I_S$ are $(1,4,5,2)$, $(1,5,6,2)$ and $(1,6,8,3)$.
\end{question}

In \cite{R} the notion of ring with canonical reduction is introduced. More precisely, we say that a one-dimensional Cohen-Macaulay ring $(R,\mathfrak m)$ has a canonical reduction if there exists a canonical ideal $I$ of $R$ that is a reduction of $\mathfrak m$. In \cite[Theorem 3.13]{R} it is proved that a numerical semigroup ring $k[[S]]$ has a canonical reduction if and only if $n_1+\F(S)-g \in S$ for every $g \in \mathbb N \setminus S$. It is easy to see that this is equivalent to require that $n_1+\F(S)-f \in S$ for every $f \in \PF(S)$ and we say that $S$ has a canonical reduction if satisfies this property. Therefore, Proposition \ref{minimum index} implies that a nearly Gorenstein semigroup has a canonical reduction 
and, thus, we have the following implications:
\[
\text{ Almost symmetric} \Rightarrow \text{  Nearly Gorenstein} \Rightarrow \text{ Semigroup with canonical reduction.}
\]
It is natural to ask if Theorem \ref{t<4} is still true for numerical semigroups with four generators that have a canonical reduction. However, the semigroup $S=\langle 16,17,19,39 \rangle$ has type four and it is easy to see that it has a canonical reduction using the criterion above. 
On the other hand, several computations suggest the following question:

\begin{question}
Let $k[[S]]$ be a numerical semigroup ring with canonical reduction and assume that $S$ has embedding dimension four. Is $t(S) \leq 4$?
\end{question}

\medskip

\noindent
{\bf Acknowledgements}.
The authors would like to thank Alessio Sammartano for many useful discussions about the topics of this paper.

\end{document}